\theoremstyle{plain}
\newtheorem{thm}{Theorem}[section]
\newtheorem{lem}[thm]{Lemma}
\theoremstyle{definition}
\newtheorem{alg}[thm]{Algorithm}
\theoremstyle{remark}
\newcommand{\R}{\mathbbm{R}}
\newcommand{\N}{\mathbbm{N}}
\renewcommand{\P}{\mathbf{P}}
\newcommand{\E}{\mathbf{E}}
\newcommand{\V}{\mathrm{Var}}
\newcommand{\Cov}{\mathrm{Cov}}
\newcommand{\e}{\mathrm{e}}
\newcommand{\vt}{\vartheta}
\newcommand\indi{\protect\mathpalette{\protect\indiT}{\perp}}
\def\indiT#1#2{\mathrel{\rlap{$#1#2$}\mkern2mu{#1#2}}}
\begin{document}

\title{On generating fully discrete samples of the stochastic heat equation on an interval}
\author{Florian Hildebrandt}
\date{Universit\"at Hamburg}
\maketitle

\begin{abstract}
Generalizing an idea of \citet{DavieGaines2001}, we present a method for the simulation of fully discrete samples of the solution to the stochastic heat equation on an interval. We provide a condition for the validity of the approximation, which holds particularly when the number of temporal and spatial observations tends to infinity. Hereby, the quality of the approximation is measured in total variation distance. In a simulation study we calculate temporal and spatial quadratic variations from sample paths generated both  via our method and via naive truncation of the Fourier series representation of the process. 
%The results provided by our method turn out to be more accurate at a considerably lower computational cost.  
Hereby, the results provided by our method are more accurate at a considerably lower computational cost.  
\end{abstract}

\noindent\textbf{Keywords:} Stochastic heat equation, simulation, total variation distance, high frequency observations, power variations
\smallskip

\noindent\textbf{2010 MSC:} 60H15, 65C30, 62E17   

\section{Introduction}
In this article we consider a method for generating discrete samples $X_{t_i}(y_k)$ on a regular grid $((t_i,y_k),\,0\leq i\leq N,0\leq k\leq M)\subset [0,T]\times[0,1]$, where $X$ is the weak solution to the stochastic partial differential equation (SPDE)
\begin{equation} \label{eq:SPDEintro}
\begin{cases}
dX_t(x) = \left(\vt_2\frac{\partial^2}{\partial x ^2} X_t(x) + \vt_1\frac{\partial}{\partial x } X_t(x) +\vt_0 X_t(x)\right)\,dt+\sigma\,dW_t(x),\quad x\in[0,1],t\in [0,T],\\
X_t(0)=X_t(1)=0, \\
X_0 = \xi.
\end{cases}
\end{equation}
Hereby, $dW$ denotes space-time white noise, $\xi$ is some initial condition independent of $dW$ and $T\in (0,\infty]$.\\

The process defined by \eqref{eq:SPDEintro} has recently gained considerable interest in the area of mathematical statistics, the focus being the  problem of estimating the parameters $(\sigma^2,\vt_2,\vt_1,\vt_0)$ based on discrete space-time observations, see \cite{Cialenco17,Bibinger18,Uchida19,HildebrandtTrabs2019}. As the primary foundation for their analysis and simulations authors have used the fact that the solution of \eqref{eq:SPDEintro} admits a representation $X_t(y)=\sum _{\ell\geq 1}u_\ell(t)e_\ell(y)$, where $(u_\ell)_{\ell\geq 1}$ are independent one dimensional Ornstein-Uhlenbeck processes and $(e_\ell)_{\ell\geq 1}$ are the eigenfunctions of the differential operator associated with \eqref{eq:SPDEintro}. In particular, in order to simulate $X$ on a space-time grid, the approximation  $X_{t_i}^{\mathrm{trunc}}(y_k) = \sum _{\ell= 1}^K u_\ell(t_i)e_\ell(y_k)$ for some large integer $K$ appears natural in view of the increasing drift towards 0 of the processes $u_\ell$ for $\ell \to \infty$. Hereby, the processes $u_\ell$ can be simulated exactly based on their AR(1)-structure or via an exponential Euler scheme, see \cite{Cialenco17}. As empirically observed, e.g.~by \citet{Uchida19}, the value of $K$ has to be chosen carefully depending on the numbers of temporal and spatial observations $N$ and $M$. In fact, even for moderate sample sizes, large values of $K$ turn out to be crucial in order to prevent a severe bias in the simulated data. This makes  simulations very costly.\\

Generalizing an idea stated in \cite{DavieGaines2001}, in this article we analyze an alternative  approach, leading to almost exact (in distribution) discrete samples of $X$ at a considerably lower computational cost. The two key observations leading to the method are: firstly, the first $M$ rescaled eigenfunctions $e_\ell$ are orthorgonal with respect to the empirical inner product, which yields a representation of the spatially discrete data in terms of a finite number of eigenfunctions. Secondly, for large values of $\ell$ the process $(u_\ell(t_i),\,0\leq i\leq N)$ can be approximated well by a set of independent random variables. Here, the coefficient processes corresponding to high Fourier modes are replaced by a set of independent random variables rather than truncated, hence, we shall call this approach the \emph{replacement method}, as opposed to the \emph{truncation method}.\\
%\citet{DavieGaines2001} claim that the simulation procedure is exact for any finite set of observations.
Denoting $\Delta=t_{i+1}-t_i$, our precise analysis reveals that it is sufficient to generate discrete samples of $J\geq M$ Ornstein-Uhlenbeck processes accompanied by a sample of the same size of independent normal random variables, as long as (roughly) $J/\sqrt \Delta \to \infty $. Hereby, the quality of the approximation is measured in terms of the total variation distance of the random vector $(X_{t_i}(y_k),0\leq i\leq N,0\leq k\leq M)$ from its approximation. Although the magnitude of the total variation distance in our convergence result is explicit, it is not informative in the sense of a rate of convergence since the reference measure changes with the values of $N$ and $M$.\\

The literature on approximation of SPDEs usually focuses on controlling errors of the type $\E(\Vert X(T)-X^{\mathrm{a}}(T)\Vert_{L^2})$ (strong sense) or  $\vert\E(\phi( X(T)))-\E(\phi(X^{\mathrm{a}}(T))\vert$ (weak sense) for an approximation $X^a$ of $X$, a fixed time instance $T$ and a continuous functional $\phi$, see e.g.~\cite{JentzenKloeden2009}. Our primary goal, on the other hand, is to mimic the distribution of the discrete observations $(X_{t_i}(y_k),\,0\leq i \leq N,\,0\leq k\leq M)$ as well as possible, particularly when at least one of the numbers $M$ and $N$ tends to infinity. This is an important task, for instance, with regard to computation of the asymptotic value of power variations, which are used  in the statistical theory for SPDEs, for example. The corresponding functionals, mapping sample paths to the asymptotic value of their power variations, are not continuous:  a function close to zero can have arbitrarily rough paths. Hence, the known bounds on the strong or weak approximation error do not provide conditions under which the approximate power variation is close to the true one, in general. Here,
controlling the total variation distance between the discrete sample and its approximation is an appropriate tool: given that the total variation distance becomes negligible, functionals computed from the approximation converge to the correct weak limit (if existent), see also the discussion following Theorem \ref{thm:TV}. 
We remark that \citet{ChongWalsh2012} examined the related question how finite difference approximations affect the asymptotic value of power variations of the stochastic heat equation.
\\

This article is organized as follows: in Section \ref{sec:Model} we give a precise definition of the probabilistic model and recall some of its properties. In Section \ref{sec:MainResult} we present the replacement method and state our convergence result. Section \ref{sec:Simulations} is devoted to a numerical example, particularly comparing our simulation method with the truncation method.  Finally, Section \ref{sec:Proofs} contains the proofs of our results.

\section{Probabilistic model} \label{sec:Model}
We consider the linear parabolic SPDE
\eqref{eq:SPDEintro} driven by a cylindrical Brownian motion $W$ where  $\xi \in L^2([0,1])$ is some initial condition independent of $W$.
More  precisely, we consider the weak solution $X = (X_t(x), \,t\geq 0 ,\, x \in [0,1])$ to $dX_t=A_\vt X_t\,dt+\sigma dW_t$
associated with the differential operator $A_\vt=\vt_2\frac{\partial^2}{\partial x ^2}+ \vt_1\frac{\partial}{\partial x }  +\vt_0 $. As usual, the Dirichlet boundary condition in \eqref{eq:SPDEintro} is implemented in the domain 
$\mathcal D (A_\vt) = H^2((0,1))\cap H_0^1((0,1))$ of $A_\vt$ where $H^k((0,1))$  denotes the $L^2$-Sobolev spaces of order $k\in\N$ and with $H_0^1((0,1))$ being the closure of $C_c^\infty((0,1))$ in $H^1((0,1))$. The cylindrical Brownian motion $W$ is defined as a linear mapping $L^2((0,1)) \ni u \mapsto W_\cdot(u)$ such that 
 $t\mapsto  W_t(u)$ is a one-dimensional standard Brownian motion for all normalized $u\in L^2([0,1])$
and such that the covariance structure is 
$\Cov\left( W_t(u) , W_s(v) \right)=(s\wedge t)\, \langle u,v \rangle,$ for $u,v\in L^2([0,1]),\,s,t\geq 0$. $W$ can thus be understood as the anti-derivative in time of space-time white noise.\\

Throughout, we assume that the parameters in \eqref{eq:SPDEintro} belong to the set
$$\Theta = \left\{(\sigma^2,\vt_2,\vt_1,\vt_0)\in \R^4:\,\sigma^2, \vt_2,\frac{\vartheta_1^2}{4\vartheta_2 ^2}-\frac{\vartheta_0}{\vartheta_2}+\pi^2>0\right\},$$ 
from which it follows that $A_\vt$ is a negative self-adjoint operator. Consequently, there is a unique weak solution of \eqref{eq:SPDEintro}, which is given by the variation of constants formula 
$
X_t  =\e^{tA_\vartheta}\xi + \sigma\int_0^t \e^{(t-s)A_\vartheta}\,dW_s,\, t\geq 0,
$ 
where $(\e^{tA_\vt})_{t\geq 0}$ denotes the strongly continuous semigroup generated by $A_\vt$, see \cite[Theorem 5.4]{DaPrato14}.

In order to derive a Fourier representation of $X$, consider $L^2[0,1]$ equipped with the weighted inner product 
\begin{align*}
\langle u, v \rangle:=\langle u, v \rangle_\vt:= \int_0^1 u(x)v(x)\e^{\kappa x}\,dx,\qquad
\text{where } \kappa = \frac{\vt_1}{\vt_2},
\end{align*}
$u,v \in L^2([0,1])$, such that $A_\vt$ admits a complete orthonormal system of eigenfunctions $(e_\ell)_{\ell \geq 1}$ with respective eigenvalues  $(\lambda_\ell)_{\ell \geq 1}$, namely
\begin{gather*} e_\ell (y) = \sqrt{2} \sin(\pi \ell y )\e^{-\kappa y/2},\quad
\lambda_\ell = \pi^2 \vt_2 \ell^2 +\frac{\vt_1^2}{4\vt_2}-\vt_0, \qquad  y\in [0,1],\,\ell\in \N.
\end{gather*}
The cylindrical Brownian motion can be realized via $W_t(\cdot)= \sum_{\ell\geq 1} \beta_\ell(t)\langle \cdot,e_k\rangle$ for a sequence of independent standard Brownian motions $(\beta_\ell)_{\ell \geq 1}$. Hence, in terms of the projections $u_\ell(t):=\langle X_t,e_\ell\rangle,\,t\ge0,\ell \in \N$, we obtain the representation
\begin{equation} \label{eq:SPDE_solution}
{X_t (x)}  {=\sum _{\ell\geq 1} u_\ell(t)e_\ell(x)}, \quad t \geq 0,\,x\in [0,1]. 
\end{equation}
Hereby, the coefficients $(u_\ell)_{\ell \ge1}$ are one dimensional independent Ornstein-Uhlenbeck processes, satisfying
$
du_\ell(t)= -\lambda_\ell u_\ell(t)\,dt +\sigma\, d\beta_\ell(t)
$
or, equivalently,
$$u_\ell(t)= u_\ell(0)\e^{-\lambda_\ell t}+\sigma\int_0^t \e^{-\lambda_\ell (t-s)}\,d\beta_\ell(s),\qquad u_\ell(0)=\langle\xi,e_\ell \rangle,$$
in the sense of a finite dimensional stochastic integral.

When $X$ is only considered at the discrete points $y_k= \frac{k}{M},\,k=0,\ldots,M,$ in space it is possible to further simplify the series representation \eqref{eq:SPDE_solution}. 
To that aim, we introduce the weighted empirical inner product $$\langle u,v \rangle_{M}:=\langle u,v \rangle_{\vt,M}:=\frac{1}{M}\sum_{m=0}^M u(y_k)v(y_k)\e^{\kappa y_k}$$ for functions $u,v:[0,1]\to \R$. Elementary trigonometric identities show that the first coefficient processes $(e_\ell,\,\ell\leq M-1)$ form an orhonormal basis with respect to $\langle\cdot,\cdot\rangle_M$, i.e.
$$\langle  e_\eta  , e_\nu  \rangle_M=\frac{2}{M}\sum_{k=0}^M \sin(\pi\eta y_k)\sin(\pi\nu y_k) = \delta_{\eta\nu},\qquad 1\leq \eta,\nu\leq M-1.$$
Therefore, in combination with the properties $\bar e_M =0,\,\bar e_{\eta+2\ell M}=\bar e_{\eta}$ and   $\bar e_{2M-\eta+2\ell M}=-\bar e_{\eta}$ for $\bar e_\ell :=(e_\ell(y_0),\ldots,e_\ell(y_M))\in \R^{M+1}$ and any  $\ell \geq 1$, we can pass to the representation
\begin{equation} \label{eq:X.yk}
X_{t}(y_k)= \sum_{m=1}^{M-1} U_m(t)e_m (y_k),\qquad t\geq 0,\, k=0,\ldots,M,
\end{equation}
$$ \text{where}\quad U_m(t)= \langle X_t(\cdot), e_m\rangle_M =\sum_{\ell \in \mathcal I_m^+} u_{\ell}(t)-\sum_{\ell \in \mathcal I_m^-} u_{\ell}(t)$$
with  $\mathcal{I}_m^+ = \{m+2\ell M,\,\ell \in \N_0\}$ and $\mathcal{I}_m^- =  \mathcal \{2M-m+2\ell M,\,\ell \in \N_0\}$.  Thus, for discrete observations on a grid, there is a representation of $X$ in terms of a finite number of  independent coefficient processes.\\

Regarding the initial condition $X_0=\xi$, we will focus on the two most important scenarios: One case, naturally playing an outstanding role, is that of a stationary initial distribution, where $u_\ell(0)=\langle\xi,e_\ell \rangle_\vt,\,\ell\geq 1,$ are independent with $u_\ell(0)\sim \mathcal N(0,\frac{\sigma^2}{2\lambda_\ell})$. The second one is a vanishing initial condition $X_0=0$. The particular importance of this case comes from the fact that the solution $X$ with an arbitrary initial condition $X_0=\xi$ can always be decomposed into $X_t= X_t^0+\e^{A_\vt t}X_0$, where $X^0$ is the solution with zero initial condition and $\e^{A_\vt t}X_0=\sum_{\ell \geq 1}\e^{-\lambda_\ell t}\langle X_0,e_\ell \rangle_\vt e_\ell$. In the sequel, we will use the notation $X^{\mathrm{st}},\,u_\ell^{\mathrm{st}}$ and $U_m^{\mathrm{st}}$ for the stationary solution and $X^0,\,u_\ell^{0}$ and $U_m^{0}$ for the solution starting in zero.\\ 

We end this section by introducing some notation: $\mathrm{TV}(P,Q)= \sup_{A\in \mathcal F}|P(A)-Q(A)|$ denotes the total variation distance between two probability measures $P$ and $Q$ on a common measurable space $(\Omega,\mathcal F)$. We also write $\mathrm{TV}(X,Y)$ for the total variation distance between the laws of two random variables $X$ and $Y$ with the same sample space. Further, for sequences $(a_n)$ and $(b_n)$ we write $a_n\lesssim b_n$ if there exists $C>0$ such that $|a_n| \leq C|b_n|$ for all $n\in \N$. The expression $a_n\eqsim b_n$ means that $a_n\lesssim b_n \lesssim a_n$. The Frobenius norm for matrices is denoted by $\Vert\cdot \Vert_F$ and, finally, the notation $M,N\to \infty$ is used in the sense of $\min(M,N)\to \infty$.

\section{Simulation method and convergence result} \label{sec:MainResult}
Our aim is to generate discrete samples $(X_{t_i}(y_k),\,i\leq N,k\leq M)$ of the process defined via \eqref{eq:SPDEintro} at the equidistant points
$$y_k=\frac{k}{M},\;k=0,\ldots,M,\qquad t_i = \frac{iT}{N},\;i=0,\ldots N,$$ 
where all of the numbers $N,M \in \N$ and $T>0$ are allowed to tend to infinity, in general. For the temporal and spatial mesh sizes we write
$$\Delta := t_{i+1}-t_{i}= \frac{T}{N},\qquad \delta:=y_{k+1}-y_k=\frac{1}{M}.$$\\
From representation \eqref{eq:X.yk} it is clear that sampling from $X$
at the grid points $(t_i,y_k)$ is equivalent to sampling from the processes $U_m,\,m\leq M-1$ at times $t_0,\ldots ,t_N$. Further, any coefficient process $u_\ell$ may be simulated exactly using its AR(1)-structure, namely
\begin{equation} \label{eq:AR1}
u_\ell(0)=\langle\xi,\e_\ell \rangle_\vt,\qquad u_\ell(t_{i+1})= \e^{-\lambda_\ell \Delta}u_\ell(t_{i})+\sigma \sqrt{\frac{1-\e^{-2\lambda_\ell \Delta}}{2\lambda_\ell}}N_{i}^\ell,\quad i\in \N,
\end{equation}
where $(N_i^\ell )$ are independent standard normal random variables. \\

To derive the simulation method let us first assume that $X_0=0$. In this case, the coefficient processes $u^0_k$ are centered Gaussian with covariance function
$$\Cov(u^0_\ell(t_i), u^0_\ell(t_j))= \frac{\sigma^2}{2\lambda_\ell}\e^{-\lambda_\ell|i-j|\Delta}\left(1-\e^{-2\lambda_\ell \min(i,j)\Delta} \right),\qquad 0\leq i,j\leq N.$$  
Thus, when $\lambda_\ell\eqsim \ell^2$ is large compared to $1/\Delta$, the random variables $(u^0_\ell(t_i),1\leq i\leq N)$ effectively behave like  iid Gaussian random variables with variance 
$$\V(u^0_\ell(t_i)) \approx \frac{\sigma^2}{2\lambda_\ell},\quad 1\leq i\leq N,$$
due to the exponential factor $\e^{-\lambda_\ell|i-j|\Delta}$ in the covariance.
Now, in order to define the approximation of the processes $U_m^0$,
choose $L= L_{M,N}\in \N$ and replace all coefficient processes $(u_\ell(t_i),\,1\leq i\leq N)$ with $\ell \geq LM$ by a vector of independent normal random variables with variance $\sigma^2/(2\lambda_\ell)$. Hereby, counting in multiples of $M$ is convenient due to the particular form of the index sets $\mathcal I_m = \mathcal I_m^+ \cup \mathcal I_m^- $. Since the normal distribution is stable with respect to summation, for each $m<M$  it is sufficient to generate one set $(R_m^{0,L}(i),\,1\leq i\leq N)$ of independent random variables with $R_m^{0,L}(i)\sim \mathcal N(0,s_m^2)$, where 
\begin{equation} \label{eq:def.sm2}
s_m^2= \sum_{\ell \in \mathcal I_m,\,\ell \geq  LM} \frac{\sigma^2}{2\lambda_\ell}
\end{equation}
and the resulting approximation is defined by 
\begin{equation*} 
U_m^{0,L}(0)=0,\qquad U_m^{0,L}(t_i)=\sum_{\ell \in \mathcal I_m,\ell < LM} u^0_{\ell}(t_i)+ R_m^{0,L}(i),\quad 1\leq i\leq  N.
\end{equation*}

Similarly, for the stationary solution $X^{\mathrm{st}}$, the coefficient processes $u_\ell^{\mathrm{st}}$ are centered Gaussian with covariance function
$$\Cov(u^{\mathrm{st}}_\ell(t_i), u^{\mathrm{st}}_\ell(t_j))= \frac{\sigma^2}{2\lambda_\ell}\e^{-\lambda_\ell|i-j|\Delta},\qquad 0\leq i,j\leq N.$$  Consequently, for iid random variables $(R_m^{\mathrm{st},L}(i),\,0\leq i\leq N)$ with $R_m^{\mathrm{st},L}(i)\sim \mathcal N(0,s_m^2)$ we define the approximation 
\begin{equation*}
U_m^{\mathrm{st},L}(t_i)=\sum_{\ell \in \mathcal I_m,\ell < LM} u^{\mathrm{st}}_{\ell}(t_i)+ R_m^{\mathrm{st},L}(i),\quad 0\leq i\leq  N.
\end{equation*}
%%%%%%%%%%%%%%%%%%%%%%%%%%%%%%%%%%%%%%%%%%%%%%%%%%%%%%%%%%%%%%%

In order to generate samples based on the replacement method it is necessary to calculate the variances $s_m^2$. Hereby, approximating the infinite series \eqref{eq:def.sm2} can be avoided thanks to the closed form expression provided by the following lemma.
\begin{lem} \label{lem:sm2}
Let $\Gamma = \frac{\vt_1^2}{4\vt_2^2}-\frac{\vt_0}{\vt_2}$, $\Gamma_0 = \sqrt{|\Gamma|}$ and define $\Sigma \in \R^{(M+1)\times (M+1)}$ via $ \Sigma_{kl}= \rho(y_k,y_l)$, where $\rho:[0,1]^2\to\R$ is the symmetric function given by   
$$
\rho(x,y)=\frac{\sigma^2}{2\vartheta_2} \cdot 
\begin{cases} 
\frac{\sin(\Gamma_0 (1-y))\sin(\Gamma_0 x)}{\Gamma_0\sin(\Gamma_0)}, & \Gamma <0,\\
x(1-y), & \Gamma=0,\\
\frac{\sinh(\Gamma_0(1-y))\sinh(\Gamma_0 x)}{\Gamma_0 \sinh(\Gamma_0)}, & \Gamma >0,
\end{cases}
\qquad
\text{for } x\leq y.
$$
Further, let $b_m= \sqrt 2 (\sin(\pi m y_0),\ldots,\sin(\pi m y_M))^\top\in \R^{M+1}$. The variance $s_m^2$ defined by \eqref{eq:def.sm2} satisfies
\begin{equation} \label{eq:formula.sm2}
s_m^2 =\frac{1}{M^2}b_m^\top\Sigma b_m - \sum_{\ell \in \mathcal I_m,\,\ell < LM }\frac{\sigma^2}{2\lambda_\ell}.
\end{equation}
  
\end{lem}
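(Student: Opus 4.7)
\emph{Proof plan.} The essence of the lemma is the identity
\[
\frac{1}{M^2}\,b_m^\top\Sigma b_m \;=\; \sum_{\ell\in\mathcal I_m}\frac{\sigma^2}{2\lambda_\ell},
\]
from which the stated formula for $s_m^2$ follows by subtracting the finite subsum with $\ell<LM$. The plan is therefore to expand $\rho$ as an infinite sine series and then collapse the double sum against $b_m$ using the discrete orthogonality of $(e_\ell)$ on the grid.

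\textbf{Step 1: Identify $\rho$ as a Green's function.} Recall $\lambda_\ell=\vt_2(\pi^2\ell^2+\Gamma)$. I would first observe that, up to the prefactor $\sigma^2/(2\vt_2)$, the kernel $\rho$ coincides with the Green's function $G(x,y)$ of the Sturm--Liouville operator $-\partial_x^2+\Gamma$ on $[0,1]$ with Dirichlet boundary conditions. In the three cases $\Gamma\lessgtr 0$ and $\Gamma=0$ this is a standard computation (variation of parameters using $\sin(\Gamma_0 x),\sin(\Gamma_0(1-x))$, respectively $\sinh$, as a fundamental system; Wronskian $-\Gamma_0\sin(\Gamma_0)$, etc.) and matches the piecewise expression given for $\rho$.

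\textbf{Step 2: Eigenfunction expansion.} The functions $\tilde e_\ell(x):=\sqrt2\sin(\pi\ell x)$ form a complete orthonormal system in $L^2([0,1])$ and diagonalise $-\partial_x^2+\Gamma$ with eigenvalues $\pi^2\ell^2+\Gamma=\lambda_\ell/\vt_2$. The standard spectral representation of the Green's function then yields
\[
\rho(x,y)\;=\;\frac{\sigma^2}{2\vt_2}\sum_{\ell\geq1}\frac{\tilde e_\ell(x)\tilde e_\ell(y)}{\pi^2\ell^2+\Gamma}\;=\;\sum_{\ell\geq1}\frac{\sigma^2}{2\lambda_\ell}\,\tilde e_\ell(x)\tilde e_\ell(y),\qquad x,y\in[0,1].
\]
I would justify this either by invoking the general spectral theorem for the Green's function of a coercive Sturm--Liouville problem or, if a more elementary argument is preferred, by directly verifying via the partial fraction identities for $\cot$/$\coth$ that the series on the right equals the closed form of $\rho$.

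\textbf{Step 3: Plug into $b_m^\top \Sigma b_m$ and exploit discrete orthogonality.} Since $(b_m)_k=\tilde e_m(y_k)$ and $\Sigma_{kl}=\rho(y_k,y_l)$, the expansion from Step 2 gives
\[
b_m^\top\Sigma b_m\;=\;\sum_{\ell\geq1}\frac{\sigma^2}{2\lambda_\ell}\Bigl(\sum_{k=0}^M\tilde e_m(y_k)\tilde e_\ell(y_k)\Bigr)^{\!2}.
\]
Using the relations $\bar e_{\eta+2jM}=\bar e_\eta$, $\bar e_{2M-\eta+2jM}=-\bar e_\eta$ and $\bar e_{jM}=0$ recalled after \eqref{eq:X.yk}, together with the discrete orthogonality $\frac2M\sum_k\sin(\pi\eta y_k)\sin(\pi\nu y_k)=\delta_{\eta\nu}$ for $1\le\eta,\nu\le M-1$, the inner sum equals $\pm M$ when $\ell\in\mathcal I_m=\mathcal I_m^+\cup\mathcal I_m^-$ and vanishes otherwise. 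Hence
\[
\frac{1}{M^2}b_m^\top\Sigma b_m\;=\;\sum_{\ell\in\mathcal I_m}\frac{\sigma^2}{2\lambda_\ell},
\]
and subtracting the finite piece $\sum_{\ell\in\mathcal I_m,\,\ell<LM}\sigma^2/(2\lambda_\ell)$ gives \eqref{eq:formula.sm2}.

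\textbf{Main obstacle.} The only non-routine step is Step 2: matching the closed-form kernel $\rho$ to its eigenfunction series, particularly making sure all three branches ($\Gamma<0$, $=0$, $>0$) are covered with the correct signs and that the series converges in a sense sufficient to justify the interchange of summation in Step 3. Since the Green's function kernel is continuous and the coefficients decay like $\ell^{-2}$, pointwise absolute convergence is immediate and the interchange is unproblematic; the algebraic verification of the closed form is a bounded case analysis, so no real technical difficulty arises.
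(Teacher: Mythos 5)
Your proposal is correct, and it reaches the key identity $\frac{1}{M^2}b_m^\top\Sigma b_m=\sum_{\ell\in\mathcal I_m}\frac{\sigma^2}{2\lambda_\ell}$ by a route that differs in mechanics from the paper's. The paper argues probabilistically: it cites an external result (Proposition 2.1 of the Hildebrandt--Trabs paper) stating that $\Sigma$ is the covariance matrix of the weighted stationary field $\tilde X_0^{\mathrm{st}}(y_\cdot)=(\e^{\kappa y_k/2}X_0^{\mathrm{st}}(y_k))_k$, and then computes $\frac{1}{M^2}b_m^\top\Sigma b_m=\frac{1}{M^2}\V\bigl(b_m^\top\tilde X_0^{\mathrm{st}}(y_\cdot)\bigr)=\V\bigl(\langle X_0^{\mathrm{st}},e_m\rangle_M\bigr)$, which equals $\sum_{\ell\in\mathcal I_m}\sigma^2/(2\lambda_\ell)$ because $U_m^{\mathrm{st}}(0)$ is the signed sum of the independent Gaussians $u_\ell(0)\sim\mathcal N(0,\sigma^2/(2\lambda_\ell))$, $\ell\in\mathcal I_m$. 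You instead identify $\rho$ directly as (a multiple of) the Dirichlet Green's function of $-\partial_x^2+\Gamma$, expand it in the sine eigenbasis with coefficients $\sigma^2/(2\lambda_\ell)$, and collapse the quadratic form via the discrete orthogonality and aliasing relations $\bar e_{\eta+2jM}=\bar e_\eta$, $\bar e_{2M-\eta+2jM}=-\bar e_\eta$, $\bar e_{jM}=0$. The two arguments encode the same underlying fact — the spectral expansion of the stationary covariance kernel — but yours is self-contained where the paper's leans on the cited proposition (which is where the Green's-function computation effectively lives), at the cost of your having to verify the three-branch closed form and the parameter condition $\Gamma+\pi^2>0$ ensuring $\Gamma_0<\pi$ (so $\sin(\Gamma_0)\neq0$) in the case $\Gamma<0$. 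Your convergence/interchange justification via the $\ell^{-2}$ decay is adequate, and the discrete orthogonality step is exactly the computation the paper implicitly performs when passing from $b_m^\top\tilde X_0^{\mathrm{st}}(y_\cdot)$ to $M\langle X_0^{\mathrm{st}},e_m\rangle_M$.
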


Our simulation method is summarized in the following algorithm:
{
\begin{alg}[Replacement method] \label{alg:replacement}
Choose $L\in \N.$\\[3mm]
For $1\leq m < M$ do the following:
	\begin{enumerate}[(1)]
	\item For $\ell \in \mathcal I_m\cap (0,LM)$ simulate $(u_\ell(t_i),\,0\leq i\leq N)$ according to $\eqref{eq:AR1}$.
	\item Compute $s_m^2$ according to \eqref{eq:formula.sm2} and generate  $R^L_m(0),\ldots,R^L_m(N)\sim \mathcal N(0,s_m^2)$ independently. For the zero initial condition replace $R_m^L(0)$ by $0$.
	\item Compute $$U^L_m(t_i)=\sum_{\ell \in \mathcal I_m,\ell < LM} u_{\ell}(t_i)+ R_m^L(i),\quad 0\leq i\leq  N.$$ 
	\end{enumerate}
\textbf{Output:}  $X^{L}_{t_i}(y_k)= \sum_{m=1}^{M-1}U^L_m(t_i)e_m(y_k)$ for $0\leq k\leq M$ and $0\leq i \leq N$.
\end{alg}
Assuming a finite set of observations, \citet{DavieGaines2001} proposed the replacement method with $L=1$, while omitting a detailed analysis. 
The following theorem theoretically justifies their approach and, allowing for $M,N\to \infty$, it provides a condition on $L$ for the validity of the approximation in total variation distance.   
}

\begin{thm} \label{thm:TV}
Let $\mathcal X$ be the  observation vector $\mathcal X=(X_{t_i}(y_k),\,i\leq N,k\leq M)$ either with zero or with stationary initial condition and let $\mathcal X^L$ be its approximation computed via Algorithm \ref{alg:replacement}. 
\begin{enumerate}[(i)]
\item There exist constants $c,C>0$ only depending on the parameters $(\sigma^2,\vt)$ such that 
$$\mathrm{TV}(\mathcal X,\mathcal X^L)\leq C\, \sqrt{MN} \e^{-cL^2M^2 \Delta}.$$
\item Assume $T\Delta^q \to 0$ for some $q>0$.
If there exists  $\alpha>1/2$ such that $LM \Delta^{\alpha} \to \infty$, then
$\mathrm{TV} (\mathcal X,\mathcal X ^L)\to 0$. In particular, if $T= \mathrm{const.}$ and $M / N^{\alpha} \to  \infty$ for some $\alpha>1/2$, then  $\mathrm{TV} (\mathcal X,\mathcal X^1)\to 0$.
\end{enumerate}
\end{thm}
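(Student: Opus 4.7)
The mapping $(U_m(t_i))_{1\leq m<M,\,0\leq i\leq N}\mapsto(X_{t_i}(y_k))_{0\leq i\leq N,\,0\leq k\leq M}$ from \eqref{eq:X.yk} is a bijective linear transformation onto its range (the $(M-1)\times(M-1)$ discrete sine matrix $(e_m(y_k))_{1\leq m,k\leq M-1}$ is invertible, and $X_{t_i}(y_0)=X_{t_i}(y_M)=0$ are deterministic). Hence $\mathrm{TV}(\mathcal X,\mathcal X^L)=\mathrm{TV}((U_m(t_i))_{m,i},(U_m^L(t_i))_{m,i})$. Since the index sets $\mathcal I_m$ are pairwise disjoint for $m=1,\ldots,M-1$, both families are independent across $m$, so the tensorisation of KL and Pinsker's inequality give
\begin{equation*}
\mathrm{TV}(\mathcal X,\mathcal X^L)\leq\sqrt{\tfrac12\sum_{m=1}^{M-1}\mathrm{KL}(U_m\Vert U_m^L)}.
\end{equation*}

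\textbf{Reduction to a Gaussian log-determinant.} I split $U_m=V_m+W_m$ according to low modes $\ell\in\mathcal I_m,\,\ell<LM$ (collected in $V_m$) and high modes $\ell\geq LM$ (collected in $W_m$); likewise $U_m^L=V_m+R_m^L$ with $V_m$ independent of both $W_m$ and $R_m^L$. By the data-processing inequality applied to $(a,b)\mapsto a+b$, $\mathrm{KL}(U_m\Vert U_m^L)\leq\mathrm{KL}(W_m\Vert R_m^L)$. Both $W_m$ and $R_m^L$ are centred Gaussian with identical diagonal variance $s_m^2$ in the stationary case, so the Gaussian KL formula simplifies (trace terms cancel) to
\begin{equation*}
\mathrm{KL}(W_m\Vert R_m^L)=-\tfrac12\log\det(I+E),\qquad E_{ij}=\frac{\mathbf{1}_{i\neq j}}{s_m^2}\sum_{\ell\in\mathcal I_m,\,\ell\geq LM}\frac{\sigma^2}{2\lambda_\ell}\e^{-\lambda_\ell|i-j|\Delta}.
\end{equation*}

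\textbf{Exponential decay and conclusion.} Viewing $E_{ij}$ as a convex combination (weights $\propto 1/\lambda_\ell$) of the quantities $\e^{-\lambda_\ell|i-j|\Delta}$, and using $\lambda_\ell\gtrsim(LM)^2$ throughout the summation range, one obtains $|E_{ij}|\leq\e^{-c(LM)^2|i-j|\Delta}$ with $c$ depending only on $(\sigma^2,\vt)$. Geometric summation yields $\Vert E\Vert_F^2\lesssim N\e^{-2c(LM)^2\Delta}$ and $\Vert E\Vert_{\mathrm{op}}\lesssim\e^{-c(LM)^2\Delta}$. Once $\Vert E\Vert_{\mathrm{op}}\leq 1/2$, the Taylor expansion $-\log\det(I+E)=\sum_{k\geq2}\tfrac{(-1)^k}{k}\mathrm{tr}(E^k)$ (using $\mathrm{tr}(E)=0$) gives $\mathrm{KL}(W_m\Vert R_m^L)\lesssim\Vert E\Vert_F^2\lesssim N\e^{-2c(LM)^2\Delta}$. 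Summing over $m$ and taking square roots proves (i); the complementary regime where $(LM)^2\Delta$ is not large enough is trivial, as the claimed bound then exceeds $1$ after adjusting $C$. For (ii), writing $(LM)^2\Delta=(LM\Delta^\alpha)^2\Delta^{1-2\alpha}$ shows this quantity grows polynomially in $\Delta^{-1}$ (since $\alpha>1/2$), while $\log(MN)$ grows at most like $\log M+\log(1/\Delta)$, so the exponential factor dominates; the special case $T=\mathrm{const.}$, $M/N^\alpha\to\infty$ corresponds to $L=1$ via $M\Delta^\alpha=M\,T^\alpha N^{-\alpha}\to\infty$.

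\textbf{Main obstacle.} The quantitative heart is the uniform (in $M,N$) control $-\log\det(I+E)\lesssim\Vert E\Vert_F^2$, which depends on bounding $\Vert E\Vert_{\mathrm{op}}$ via careful geometric-sum estimates over $\mathcal I_m^+\cup\mathcal I_m^-$ using $\lambda_\ell\eqsim\ell^2$. The zero-initial case further introduces a factor $(1-\e^{-2\lambda_\ell\min(i,j)\Delta})$ on the diagonal of $W_m$'s covariance, breaking the exact cancellation of trace terms; the induced correction is itself exponentially small in $(LM)^2\Delta$ and can be absorbed into the same bound.
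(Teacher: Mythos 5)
Your proposal is correct, and it reaches the same quantitative bound as the paper, but through a genuinely different key tool. The paper reduces to the same comparison of $\mathcal V^L=(V_m^L(t_i))$ against $\mathcal R^L=(R_m^L(i))$ and then invokes the Gaussian total variation bound of Devroye et al., $\mathrm{TV}(\mathcal N(0,A),\mathcal N(0,B))\leq \tfrac32\Vert A^{-1}(B-A)\Vert_F$, which immediately yields $\mathrm{TV}^2\lesssim\sum_m\Vert(\Xi_m^{\indi})^{-1}(\Xi_m-\Xi_m^{\indi})\Vert_F^2$ — exactly your $\sum_m\Vert E\Vert_F^2$. You instead go through Pinsker plus tensorisation of KL and the explicit Gaussian log-determinant formula; this buys self-containedness (no external Gaussian-TV lemma) at the price of having to control $\Vert E\Vert_{\mathrm{op}}$ to justify $-\log\det(I+E)\lesssim\Vert E\Vert_F^2$, a step the cited bound lets the paper skip. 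The downstream covariance estimates (convex combination over $\ell\in\mathcal I_m$, $\ell\geq LM$, monotonicity of $\lambda_\ell$, geometric summation over $|i-j|$) are identical to the paper's. Two smaller points: your explicit treatment of the regime where $(LM)^2\Delta$ is bounded (absorbing it into $C$ since $\mathrm{TV}\leq1$) is actually cleaner than the paper, whose proof of (i) tacitly uses $L^2M^2\Delta\to\infty$ even though (i) is stated unconditionally. For (ii) the paper makes the argument fully explicit by choosing $r$ with $\frac{r+q+1}{2r-1}\leq\alpha$ and using $\e^{-x}\lesssim x^{-r}$; your log-comparison is terser and the phrase ``grows polynomially in $\Delta^{-1}$'' is only literally accurate when $\Delta\to0$, but since $T\Delta^q\to0$ forces $\Delta\leq1$ eventually and $(LM\Delta^\alpha)^2\to\infty$ covers the case of $\Delta$ bounded below, the conclusion stands. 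Your sketch of the zero-initial case (trace correction of order $\sum_i\e^{-2\lambda_{LM}i\Delta}\lesssim\e^{-2\lambda_{LM}\Delta}$, off-diagonal terms dominated as before) matches the paper's computation, which likewise absorbs the extra diagonal term into the same bound.
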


A negligible total variation distance is exactly what is required for {statistical simulations} since functionals based on true  and approximate data share the same limiting distribution: let $(X_{n,k})$ and $(Y_{n,k})$ be triangular arrays of the same size and assume that $\phi_n(X_{n,\bullet})$ has a weak limit $Z$ for some sequence of functionals $\phi_n$. Then, if $\mathrm{TV}(X_{n,\bullet},Y_{n,\bullet}) \to 0$, the sequence $\phi_n(Y_{n,\bullet})$ also converges to $Z$ weakly. In fact, if $\mu_n$ is a dominating measure for $\P^{X_{n,\bullet}}$ and $\P^{Y_{n,\bullet}}$ with corresponding Radon-Nikodym derivatives $f_{X_{n,\bullet}}$ and $f_{Y_{n,\bullet}}$, then 
\begin{align*}
\big|\E(\e^{it\phi_n(X_{n,\bullet})})-\E(\e^{it\phi_n(Y_{n,\bullet})})\big|&=\Big|\int \e^{it\phi_n(z)}(f_{X_{n,\bullet}}(z)-f_{Y_{n,\bullet}}(z))\,\mu(dz) \Big| \\
&\leq  \Vert f_{X_{n,\bullet}}-f_{Y_{n,\bullet}}\Vert_{L^1(\mu)}= 2 \mathrm{TV}(X_{n,\bullet},Y_{n,\bullet}).
\end{align*}
Thus, the limiting characteristic functions coincide.\\
{Another aspect worth noting is that there is no statistical test that can consistently distinguish between two models whose total variation distance tends to zero: in such a case, the maximum of type one and type two error of any test for the true model is asymptotically bounded from below by $1/2$, see e.g.~\cite[Theorem 2.2]{Tsybakov10}.}\\

%%%%%%%%%%%%%%%%%%%%%%%%%%%%%%%%%%%%%%%%%%%%%%%%%%%%%%%%%%%%%%%%%%%%%%%%%%%%%%%%%%%%%%%%%%%%%%%%%%

\section{Simulations} \label{sec:Simulations}

\begin{figure} 
\begin{subfigure}[b]{.5\textwidth}
\centering
\includegraphics[scale=.4]{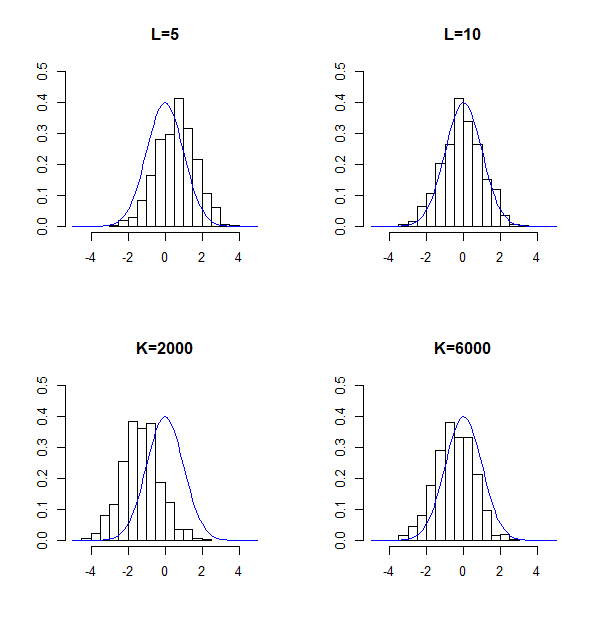}
\caption{temporal quadratic variation for\\ $N=5,000,\,M=10$ }
\label{fig:TI_comparison}
\end{subfigure}
\begin{subfigure}[b]{.5\textwidth}
\centering
\includegraphics[scale=.4]{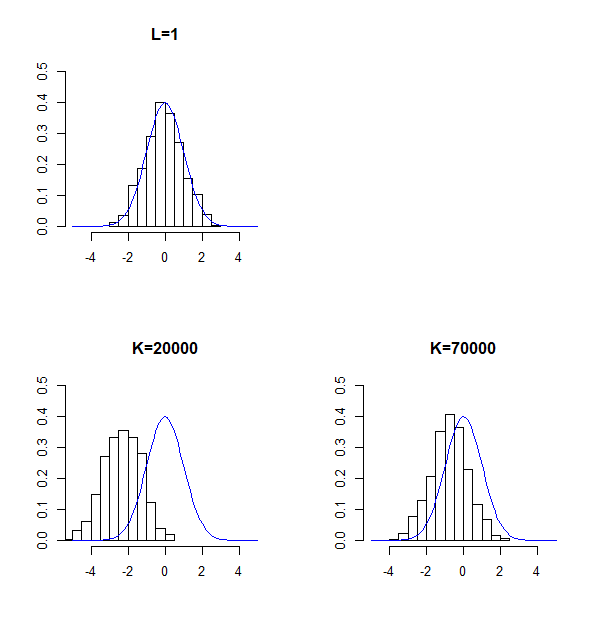}
\caption{spatial quadratic variation for\\ $N=100,\,M=1,000$  }
\label{fig:SI_comparison}
\end{subfigure}
\caption{Histograms based on 500 Monte Carlo iterations  for normalized quadratic variations based on the replacement (top) and truncation method (bottom). The solid line corresponds to the standard normal density function.}
\end{figure}

In order to test the performance of the replacement method and compare it to truncation of the Fourier series, we compute rescaled realized temporal and spatial quadratic variations, namely
\begin{align*}
V_{\mathrm t} &=\frac{1}{MN\sqrt \Delta}\sum_{i=0}^{N-1}\sum_{k=0}^{M-1}\e^{\kappa y_k}(X_{t_{i+1}}(y_{k})-X_{t_i}(y_{k}))^2,\\
V_{\mathrm{sp}} &=\frac{1}{MN\delta}\sum_{i=0}^{N-1}\sum_{k=0}^{M-1}\e^{\kappa y_k}(X_{t_{i}}(y_{k+1})-X_{t_i}(y_{k}))^2
\end{align*} 
based on both methods on the finite time horizon $T =1$. The outcomes are compared with the following theoretical results:
As shown in \cite{Bibinger18}, the temporal quadratic variation satisfies for any finite $M$
\begin{align*}
\sqrt{MN} \Big(V_{\mathrm t}-\frac{\sigma^2}{\sqrt{\pi\vt_2}}\Big) \overset{\mathcal D}{\longrightarrow} \mathcal N\Big(0,\frac{B\sigma^4}{\pi \vt_2}\Big),\qquad N\to \infty,
\end{align*}
where $B=2+ \sum_{j=1}^\infty (2\sqrt j - \sqrt{j+1} - \sqrt{j-1})^2$. In fact, the central limit theorem also holds for both $M,N\to \infty$ when considering equidistant spatial locations $(\tilde y_k,0\leq k\leq M)\subset [b,1-b]$  with  $M = o(N^\rho)$ for for some $b>0$ and $\rho<1/2$. Concerning the spatial quadratic variation, it is shown in \cite{HildebrandtTrabs2019} that if $N=o(M)$, then
\begin{align*}
\sqrt{MN} \Big(V_{\mathrm{sp}}-\frac{\sigma^2}{2\vt_2}\Big) \overset{\mathcal D}{\longrightarrow} \mathcal N\Big(0,\frac{\sigma^4}{2 \vt_2^2}\Big),\qquad M,N\to \infty.
\end{align*}
This central limit theorem is also valid when $N$ remains finite.
For the simulations we have set the parameters to the values $\sigma^2=0.1,\,\vt_2 = 0.5,\,\vt_1= -0.4,\,\vt_0 = 0.3$ and have considered the corresponding stationary initial condition. Each of the plots in Figures \ref{fig:TI_comparison} and \ref{fig:SI_comparison} shows a histogram of the centered and normalized (with respect to theoretical asymptotic means and variances) quadratic variations based on 500 Monte Carlo iterations. The solid line corresponds to the standard normal density function.\\
For the temporal quadratic variation (Figure \ref{fig:TI_comparison}) we have considered $M = 10$ spatial and $N=5,000$ temporal observations. It can be seen that the values provided by the replacement method with $L=10$ (corresponding to $L M = 100$ simulated Ornstein-Uhlenbeck processes) is already in good accordance with the theoretical limit. Note that $LM\sqrt \Delta\approx 3.2$ is far from infinity, so the method works better than predicted by Theorem \ref{thm:TV}. The truncation method, on the other hand, requires simulation of more than $6,000$ coefficient processes in order to produce accurate results and prevent a severe bias in the simulated values.  \\
Examining the results for the spatial quadratic variation (Figure \ref{fig:SI_comparison}), this effect becomes even more apparent.
Here, we considered $M = 1,000$ spatial and $N=100$ temporal observations. Consequently, $M\sqrt \Delta = 100$ and Theorem \ref{thm:TV} suggests that $L=1$ (i.e.~$LM=1,000$ simulated coefficient processes) is sufficient for the replacement method. Figure \ref{fig:SI_comparison} confirms this prediction. On the other hand, even with $K=70,000$ coefficient processes, the simulated values based on the truncation method still suffer from a severe bias.\\

In fact, the bias in the central limit theorems introduced by truncation can be explained analytically: A simple calculation shows that for the normalized temporal quadratic variation, the bias is of order $\sqrt{MN} \frac{1}{\sqrt \Delta}\sum_{\ell \geq K}\frac{1}{\lambda_\ell} \eqsim\frac{\sqrt{MN}}{K\sqrt \Delta}$ and in our simulation for the temporal quadratic variation we have $\frac{\sqrt{MN}}{\sqrt \Delta}\approx 16,000$.
Similarly, the bias for the spatial quadratic variation is of order $\sqrt{MN}\frac{1}{\delta}\sum_{\ell \geq K} \frac{1}{\lambda_\ell}\eqsim\frac{\sqrt{MN}}{K\delta}$, in our simulation we have $\frac{\sqrt{MN}}{\delta}\approx 316,000$.

\section{Proofs}\label{sec:Proofs}
First, we prove the closed form expression for the variances $s_m^2$:
\begin{proof}[Proof of Lemma \ref{lem:sm2}]
It follows from \cite[Proposition 2.1]{HildebrandtTrabs2019} that $\Sigma$ is the covariance matrix of the vector $\tilde X_0^{\mathrm{st}}(y_\cdot)=(\e^{\kappa y_1/2}X_0^{\mathrm{st}}(y_0),\ldots ,\e^{\kappa y_M/2}X^{\mathrm{st}}_0(y_M))^\top$. Therefore, the claimed formula follows from
$$\sum_{\ell \in \mathcal I_m} \frac{\sigma^2}{2\lambda_\ell} = \V \left(\langle  X_0^{\mathrm{st}}(\cdot),{e}_m \rangle_M \right) =\frac{1}{M^2}\V \left(b_m^\top \tilde X_0^{\mathrm{st}}(y_\cdot)\right) = \frac{1}{M^2}{b_m^\top}\Sigma b_m, $$ 
where the exponential factors cancel in the second step.
\end{proof}

Next, we prove our main result:
\begin{proof}[Proof of Theorem \ref{thm:TV}]
We make use of the result by \citet{Devroye19} that 
\begin{equation} \label{eq:TVbound}
\mathrm{TV}\big( \mathcal N(0,A), \mathcal N(0,B) \big) \leq \frac{3}{2}\Vert A^{-1}(B-A) \Vert_F
\end{equation}
holds for  positive definite matrices $A$ and $B$ of the same size.\\
First, we treat the case of a stationary initial condition and suppress the superscript $\mathrm{st}$ for the sake of convenience.
Since $\mathrm{TV}(f(X),f(Y))\leq \mathrm{TV}(X,Y)$ holds for any random vectors $X$ and $Y$ and any measurable function $f$, the problem can be reduced to bounding the total variation distance of $(U_m(t_i),\,i\leq N,m\leq M-1)$ from its approximation. Furthermore, since both $U_m$ and $U_m^L$ are made up of independent summands, it is sufficient to consider the parts of the sums in which the two differ. To that aim define $\mathcal R^L = (R^L_m(i),\,i\leq N,m\leq M-1)$ and $\mathcal V^L = (V^L_m(t_i),\,i\leq N,m\leq M-1)$, where $V^L_m(t) = \sum_{\ell \in \mathcal I_m,\,\ell \geq LM} u_{\ell}(t)$.
Let $\Xi_m$ be the covariance matrix of $(V^L_m(t_i),\,i\leq N)$ and $\Xi_m ^{\indi}$ be the covariance matrix of $(R^L_m(t_i),\,i\leq N)$ as well as $\Xi = \mathrm{diag}(\Xi_1,\ldots, \Xi_{M-1})$, $\Xi^{\indi} = \mathrm{diag}(\Xi_1^{\indi},\ldots, \Xi_{M-1}^{\indi})$. 
Since  $\mathcal V^L$ and $\mathcal R^L$ are centered Gaussian random vectors with covariance matrices $\Xi$ and $\Xi^{\indi}$, respectively, we can use \eqref{eq:TVbound} and the block structure to bound
\begin{align} \label{eq:TV_bound1}
\mathrm{TV} (\mathcal V^L,\mathcal R^L)^2 \leq \frac{9}{4} \Vert (\Xi^{\indi})^{-1}(\Xi-\Xi^{\indi}) \Vert_F^2 = \frac{9}{4} \sum_{m=1}^{M-1} \Vert (\Xi_m^{\indi})^{-1}(\Xi_m-\Xi^{\indi}_m) \Vert_F^2. 
\end{align}
We treat each term separately. Note that $\Xi_m^{\indi}$ is a diagonal matrix with the same diagonal elements as $\Xi_m$. 
Therefore, by the monotonicity of the exponential function,
\begin{align*}
\Vert (\Xi_m^{\indi})^{-1}(\Xi_m-\Xi_m^{\indi}) \Vert_F^2 
&= \frac{1}{s_m^4} \sum_{i\neq j} \left( \sigma^2\sum_{\ell \in \mathcal I_m,\,\ell \geq LM}\frac{\e^{-\lambda_\ell |i-j|\Delta}}{2\lambda_\ell}\right)^2\\
&\leq \frac{1}{s_m^4} \sum_{i\neq j} \left( \sum_{\ell \in \mathcal I_m,\,\ell \geq LM}\frac{\sigma^2}{2\lambda_\ell}\right)^2\e^{-2\lambda_{LM} |i-j|\Delta}\\
&=  \sum_{i\neq j}\e^{-2\lambda_{LM} |i-j|\Delta}.
\end{align*}
Using $\sum_{i=1}^\infty q^i= \frac{q}{1-q}$ for $|q|<1$, we can proceed to
\begin{align*}
\sum_{i\neq j} \e^{-2\lambda_{LM} |i-j|\Delta} \leq 2N \sum_{i=1}^\infty \e^{-2\lambda_{LM} i\Delta}
= 2N  \frac{\e^{-2\lambda_{LM} \Delta}}{1-\e^{-2\lambda_{LM} \Delta}}
\lesssim N  \e^{-2\lambda_{LM} \Delta},
\end{align*}
where the last step follows from the fact that  $L^2M^2\Delta \geq (LM\Delta^\alpha)^2 \to \infty$.
Now, letting $c>0$ be such that $c \ell^2 \leq \lambda_\ell$ for all $\ell \in \N$, we get the overall bound on the total variation distance claimed in $(i)$, namely
\begin{align*} 
\mathrm{TV} (\mathcal X,\mathcal X^L)^2\leq\mathrm{TV} (\mathcal V^L,\mathcal R^L)^2\lesssim MN \e^{-2\lambda_{LM}\Delta}\leq MN \e^{-2c L^2 M^2 \Delta}.
\end{align*}
{To prove $(ii)$, choose $r>0$ such that $\frac{r+q+1}{2r-1}\leq \alpha$. Then, using $(i)$ and $\exp(-x)\lesssim x^{-r}, x>0,$ for any $r>0$, we find $$\mathrm{TV} (\mathcal X,\mathcal X^L)^2\lesssim MN\e^{-2c L^2 M^2 \Delta}\lesssim \frac{MT}{(LM)^{2r}\Delta^{r+1}}=\frac{T\Delta^q}{L} \left(\frac{1}{LM \Delta ^{\frac{r+q+1}{2r-1}}} \right)^{2r-1} \to 0,$$} 
finishing the proof for the stationary case.\\

The case $X_0=0$ works similarly: Again, let $\Xi_m$ be the covariance matrix of $(V^L_m(t_i),\,i\leq N,m\leq M-1)$ and $\Xi_m^{\indi}$ be the covariance matrix of $(R^L_m(i),\,i\leq N,m\leq M-1)$ (without the initial deterministic value). Clearly, bound \eqref{eq:TV_bound1} remains valid and
\begin{align*}
\Vert (\Xi_m^{\indi})^{-1}(\Xi_m-\Xi_m^{\indi}) \Vert_F^2 
&=\frac{1}{s_m^4} \sum_{i,j=1}^N \left( \sigma^2\sum_{\ell \in \mathcal I_m,\,\ell \geq LM}\frac{\e^{-\lambda_\ell |i-j|\Delta}}{2\lambda_\ell}\left(1-\delta_{ij}-\e^{-2\lambda_\ell(i\wedge j)\Delta} \right)\right)^2\\
&= \frac{1}{s_m^4} \sum_{i\neq j} \left( \sigma^2\sum_{\ell \in \mathcal I_m,\,\ell \geq LM}\frac{\e^{-\lambda_\ell |i-j|\Delta}}{2\lambda_\ell}\left(1-\e^{-2\lambda_\ell(i\wedge j)\Delta} \right)\right)^2 \\
&\qquad+ \frac{1}{s_m^4}\sum_{i=1}^N \left( \sigma^2\sum_{\ell \in \mathcal I_m,\,\ell \geq LM}\frac{1}{2\lambda_\ell}\e^{-2\lambda_\ell i\Delta} \right)^2\\
&\leq \frac{1}{s_m^4} \sum_{i\neq j} \left( \sigma^2\sum_{\ell \in \mathcal I_m,\,\ell \geq LM}\frac{\e^{-\lambda_\ell |i-j|\Delta}}{2\lambda_\ell}\right)^2 \\
&\qquad+ \frac{1}{s_m^4}\sum_{i=1}^N \left( \sigma^2\sum_{\ell \in \mathcal I_m,\,\ell \geq LM}\frac{1}{2\lambda_\ell}\e^{-2\lambda_\ell i\Delta} \right)^2\\
&\leq \frac{2}{s_m^4} \sum_{i\neq j} \left( \sigma^2\sum_{\ell \in \mathcal I_m,\,\ell \geq LM}\frac{\e^{-\lambda_\ell |i-j|\Delta}}{2\lambda_\ell}\right)^2,
\end{align*}
from which the result follows as in the stationary case.
\end{proof}

\section*{Acknowledgement} I would like to thank my Ph.D.~advisor, Mathias Trabs, for the careful reading of this manuscript and his useful suggestions.

\bibliography{refs}

\begin{thebibliography}{}

\bibitem[Bibinger and Trabs, 2019]{Bibinger18}
Bibinger, M. and Trabs, M. (2019).
\newblock Volatility estimation for stochastic {PDE}s using high-frequency
  observations.
\newblock {\em Stochastic Process. Appl.}
\newblock Forthcoming.

\bibitem[Chong and Walsh, 2012]{ChongWalsh2012}
Chong, Y. and Walsh, J.~B. (2012).
\newblock The roughness and smoothness of numerical solutions to the stochastic
  heat equation.
\newblock {\em Potential Anal.}, 37(4):303--332.

\bibitem[Cialenco and Huang, 2019]{Cialenco17}
Cialenco, I. and Huang, Y. (2019).
\newblock A note on parameter estimation for discretely sampled {SPDE}s.
\newblock {\em Stoch. Dyn.}
\newblock Forthcoming.

\bibitem[Davie and Gaines, 2001]{DavieGaines2001}
Davie, A. and Gaines, J. (2001).
\newblock Convergence of numerical schemes for the solution of parabolic
  stochastic partial differential equations.
\newblock {\em Math. Comp.}, 70(233):121--134.

\bibitem[Devroye et~al., 2019]{Devroye19}
Devroye, L., Mehrabian, A., and Reddad, T. (2019).
\newblock The total variation distance between high-dimensional {G}aussians.
\newblock {\em arXiv preprint arXiv:1810.08693v3}.

\bibitem[Hildebrandt and Trabs, 2019]{HildebrandtTrabs2019}
Hildebrandt, F. and Trabs, M. (2019).
\newblock Parameter estimation for spdes based on discrete observations in time
  and space.
\newblock {\em arXiv preprint arXiv:1910.01004}.

\bibitem[Jentzen and Kloeden, 2009]{JentzenKloeden2009}
Jentzen, A. and Kloeden, P.~E. (2009).
\newblock The numerical approximation of stochastic partial differential
  equations.
\newblock {\em Milan J. Math.}, 77:205--244.

\bibitem[Kaino and Uchida, 2019]{Uchida19}
Kaino, Y. and Uchida, M. (2019).
\newblock Parametric estimation for a parabolic linear {SPDE} model based on
  sampled data.
\newblock {\em arXiv preprint arXiv:1909.13557}.

\bibitem[Prato and Zabczyk, 2014]{DaPrato14}
Prato, G.~D. and Zabczyk, J. (2014).
\newblock {\em Stochastic Equations in Infinite Dimensions}.
\newblock Cambridge University Press, Cambridge.

\bibitem[Tsybakov, 2010]{Tsybakov10}
Tsybakov, A.~B. (2010).
\newblock {\em Introduction to Nonparametric Estimation}.
\newblock Springer, New York.

\end{thebibliography}
\bibliographystyle{apalike}
\end{document}